\newtheorem{algorithm}{Algorithm}[section]
\newtheorem{example}{Example}[section]
\newtheorem{problem}{Problem}
\newcommand{\vect}{\operatorname{vec}}
\newcommand{\unvect}{\operatorname{unvec}}
\newcommand{\kpr}[1]{\textsuperscript{\textcircled{#1}}}    
\newcommand{\ten}[1]{\mathcal{#1}}  
\newcommand{\argmin}[1]{\underset{#1}{\operatorname{argmin}}}
\title{Symmetric Tensor Decomposition by an Iterative Eigendecomposition Algorithm}
\author{Kim Batselier \and Ngai Wong \thanks{Department of Electrical and Electronic Engineering, The University of Hong Kong, Hong Kong}}
\begin{document}

\maketitle

\begin{abstract}
We present an iterative algorithm, called the symmetric tensor eigen-rank-one iterative decomposition (STEROID), for decomposing a symmetric tensor into a real linear combination of symmetric rank-1 unit-norm outer factors using only eigendecompositions and least-squares fitting. Originally designed for a symmetric tensor with an order being a power of two, STEROID is shown to be applicable to any order through an innovative tensor embedding technique. Numerical examples demonstrate the high efficiency and accuracy of the proposed scheme even for large scale problems. Furthermore, we show how STEROID readily solves a problem in nonlinear block-structured system identification and nonlinear state-space identification.

\end{abstract}

\begin{keywords}
Symmetric tensor, decomposition, rank-1, eigendecomposition, least-squares
\end{keywords}

\begin{AMS}
15A69,15A18,15A23
\end{AMS}

\pagestyle{myheadings}
\thispagestyle{plain}
\markboth{KIM BATSELIER, NGAI WONG}{STEROID}

\section{Introduction}
\label{sec:intro}
Symmetric tensors arise naturally in various engineering problems. They are especially important in the problem of blind identification of under-determined mixtures \cite{Comon2006b,Ferreol2005,Lathauwer2007}. Applications of this problem are found in areas such as speech, mobile communications, biomedical engineering and chemometrics.

The main contribution of this paper is an algorithm, called the \textbf{S}ymmetric \textbf{T}ensor \textbf{E}igen-\textbf{R}ank-\textbf{O}ne \textbf{I}terative \textbf{D}ecomposition (STEROID), that decomposes a real symmetric tensor $\ten{A}$ into a linear combination of symmetric unit-norm rank-1 tensors
\begin{align}
\nonumber \ten{A} &= l_1 \, x_1 \circ x_1 \circ \ldots \circ x_1 + \ldots + l_R \, x_R \circ x_R \circ \ldots \circ x_R,\\
&= l_1 \, x_1^d + \ldots + l_R \, x_R^d,
\label{eq:tensoreigdecomp}
\end{align}
with $l_1,\ldots ,l_R \in \mathbb{R}$ and $x_1,\ldots,x_R \in \mathbb{R}^n$. The reality of the scalar coefficients $l_1,\ldots ,l_R$ is of particular importance in the nonlinear system identification algorithm presented in Section \ref{sec:app}. The $\circ$ operation refers to the outer product, which we define in Section \ref{subsec:basics}. The notation $x_i^d \,(i=1,\ldots,R)$ denotes the $d$-times repeated outer product. In contrast to other iterative methods, STEROID does not require any initial guess and, as shown in Section \ref{sec:examples}, can handle large symmetric tensors. The minimal $R=R_{\min}$ that satisfies \eqref{eq:tensoreigdecomp} is called the symmetric rank of $\ten{A}$. More information on the rank of tensors can be found in \cite{Kolda2001,jm2012tensors} and specifically for symmetric tensors in \cite{Comon2006}. The main idea of the algorithm is to first compute a set of vectors $x_1,\ldots,x_R\, (R \geq R_{\min})$ through repeated eigendecompositions of symmetric matrices. The coefficients $l_1,\ldots,l_R$ are then found from solving a least-squares problem. STEROID was originally developed for symmetric tensors with an order that is a power of 2. It is however perfectly possible to extend the applicability of the STEROID algorithm to symmetric tensors of arbitrary order by means of an embedding procedure, which we explain in Section \ref{subsec:embed}.

In \cite{Brachat2010} an algorithm is described that decomposes a symmetric tensor over $\mathbb{C}$ using methods from algebraic geometry. This involves computing the eigenvalues of commuting matrices and as a consequence, the $l$ coefficients obtained from this method are generally complex numbers. Most attention in the literature is spent in solving the low-rank (typically rank-1) approximation problem. This problem can be formulated as follows.
\begin{problem}
Given a $d$th-order symmetric tensor $\ten{A} \in \mathbb{R}^{n \times \cdots \times n}$ and a multilinear rank $r$, find an orthogonal $n \times r$ matrix $U$ and a core tensor $\ten{S} \in \mathbb{R}^{r \times \cdots \times r}$ that minimizes the Frobenius norm
$$
||\ten{A}-\ten{S} \times_1 U \times_2 U \times_3 \cdots \times_d  U||_F,
$$
where $\times_i$ denotes the $i$th-mode product.
\label{prob:prob1}
\end{problem}

Note that the Tucker form $\ten{S} \times_1 U \times_2 U \times_3 \cdots \times_d  U$ is intrinsically different from \eqref{eq:tensoreigdecomp}, since it will also contain rank-1 terms that are not symmetric. This implies that it is not very meaningful to compare the number of rank-1 terms from the Tucker form with the number of terms computed by STEROID. Algorithms designed specifically for finding rank-1 solutions to Problem \ref{prob:prob1} are the symmetric higher-order power method (S-HOPM) \cite{regalia2002,Kofidis2000} and the shifted version of S-HOPM (SS-HOPM) \cite{Mayo2011,Mayo2014}. General low-rank algorithms are the Quasi-Newton algorithm \cite{lim2010}, the Jacobi algorithm \cite{Dooren2013} and the monotonically convergent algorithm described in \cite{Regalia2013875}.

Another common decomposition is the canonical tensor decomposition/parallel factors (CANDECOMP/PARAFAC)~\cite{candecomp, tensorreview}. This decomposition expresses a tensor as the sum of a finite number of rank-1 tensors. The tensor rank can then be defined as the minimum number of required rank-1 terms. Running a CANDECOMP algorithm such as Alternating Least Squares (ALS) on a symmetric tensor does not guarantee the symmetry of the rank-1 tensors. Other iterative methods \cite{tensorlab}, using nonlinear optimization methods, are able to guarantee the symmetry of the rank-1 terms. These methods however require the need for an initial guess and the number of computed terms also needs to be decided by the user beforehand. This is the main motivation for the development of the STEROID algorithm. STEROID is an adaptation for symmetric tensors of our earlier developed Tensor Train rank-1 SVD (TTr1SVD) algorithm \cite{ttr1svd14}, which in turn was inspired by Tensor Trains \cite{Oseledets2011}, and was an independent derivation of PARATREE \cite{Koivunen2008}. In contrast to the iterative methods mentioned above, the STEROID algorithm does not require an initial guess and the total number of terms in the decomposition follows readily from the execution of the algorithm. 

The outline of this paper is as follows. First, we define some basic notations in Section \ref{subsec:basics}. In Section \ref{sec:steroid} we fully describe our algorithm by means of a running example, together with the required embedding procedure. Two methods for the reduction of the size of the least-squares problem in the STEROID algorithm are discussed in Section \ref{sec:optimizations}. One method exploits the symmetry of the tensor, while the other method exploits the structure of the matrix in the least-squares problem. The algorithm is applied to several examples in Section \ref{sec:examples} and compared with the Jacobi algorithm \cite{Dooren2013}, Regalia's iterative method described in \cite{Regalia2013875} and the CANDECOMP-algorithm from the Tensorlab toolbox \cite{tensorlab}. In Section \ref{sec:app} we show how STEROID readily solves a problem in nonlinear block-structured system identification \cite{giri2010block} and nonlinear state-space identification \cite{paduart2010identification}. In this setting, it is often desired to recover the internal structure of an identified static nonlinear mapping \cite{schoukens2012cross,tiels2013coupled,van2013identification}. More specifically, it will be shown how STEROID can decouple a set of multivariate polynomials $f_1,\ldots,f_l$ into a collection of univariate polynomials $g_1,\ldots,g_n$, through both an affine and linear transformation.

\subsection{Tensor Notations and Basics}
\label{subsec:basics}
We will adopt the following notational conventions. A $d$th-order or $d$-way tensor, assumed real throughout this article, is a multi-dimensional array $\ten{A} \in \mathbb{R}^{n_1\times n_2 \times \ldots \times n_d}$ with elements $\ten{A}_{i_1i_2\ldots i_d}$ that can be seen as an extension of the matrix format to its general $d$th-order counterpart. Although the wordings `order' and `dimension' seem to be interchangeable in the tensor community, we prefer to call the number of indices $i_k\,(k=1,\ldots,d)$ the order of the tensor, while the maximal value $n_k\,(k=1,\ldots,d)$ associated with each index the dimension. A cubical tensor is a tensor for which $n_1=n_2=\ldots=n_d=n$.
The inner product between two tensors $\ten{A},\ten{B} \in \mathbb{R}^{n_1\times \ldots \times n_d}$ is defined as
$$
\langle \ten{A},\ten{B} \rangle \;=\; \sum_{i_1,i_2,\ldots,i_d}\,\ten{A}_{i_1i_2 \ldots i_d}\,\ten{B}_{i_1i_2 \ldots i_d}.
$$
The norm of a tensor is often taken to be the Frobenius norm $||\ten{A}||_F=\langle \ten{A},\ten{A} \rangle^{1/2}$.
A $3$rd-order rank-1 tensor $\ten{A}$ can always be written as the outer product \cite{tensorreview}
$$
\ten{A}=\lambda \, a \circ b \circ c \quad \textrm{with components } \quad \ten{A}_{i_1 i_2 i_3}\;=\;\lambda \, a_{i_1}\,b_{i_2}\,c_{i_3}
$$
with $\lambda \in \mathbb{R}$ whereas $a$, $b$ and $c$ are vectors of arbitrary lengths as demonstrated in Figure \ref{fig:outerfactor}. Similarly, any $d$-way tensor of rank 1 can be written as an outer product of $d$ vectors. 

\begin{figure}[th]
\centering
\includegraphics[width=.2\textwidth]{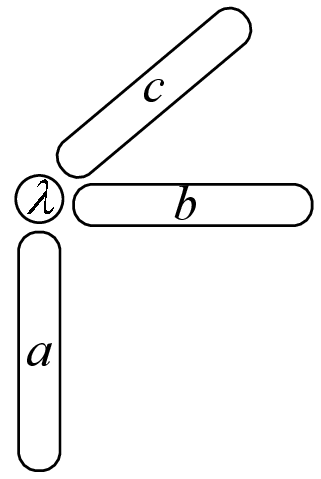}
\caption{{The outer product of 3 vectors $a,b,c$ of arbitrary lengths forming a rank-1 tensor.}}
\label{fig:outerfactor}
\end{figure}

We will only consider symmetric tensors in this article. A tensor $\ten{A}$ is symmetric if $\ten{A}_{i_1\ldots i_d} =  \ten{A}_{\pi(i_1\ldots i_d)}$, where $\pi(i_1\ldots i_d)$ is any permutation of the indices $i_1\ldots i_d$. A rank-1 symmetric $d$-way tensor $\ten{A}$ is then given by the $d$-times repeated outer product $\ten{A}=\lambda\, a \circ a \circ \ldots \circ a \triangleq \lambda a^d$. The vectorization of a tensor $\ten{A}$, denoted  
$\vect{(\ten{A})} \in \mathbb{R}^{n_1\cdots n_d}$, is the vector obtained from taking all indices together into one mode. This implies that for a symmetrical rank-1 tensor $\ten{A}$, its vectorization is
$$
\vect{(\ten{A})} \; = \; \lambda\, a \otimes a \otimes \ldots \otimes a \;=\; \lambda \,a\kpr{$d$},
$$ 
where we have introduced the shorthand notation $a\kpr{$d$}$ for the $d$-times repeated Kronecker product $\otimes$. Using the vectorization operation we can write \eqref{eq:tensoreigdecomp} as
\begin{equation}
\vect{(\ten{A})} \; =\; \lambda_1 \, x_1\kpr{$d$} + \ldots + \lambda_R \, x_R\kpr{$d$},
\label{eq:vecteneigdecomp}
\end{equation}
or equivalently as
$$
\vect{(\ten{A})} \; =\; X\;l,
$$
where $X$ is the matrix that is formed by the concatenation of all the $x_1\kpr{$d$},\ldots,x_R\kpr{$d$}$ vectors and $l \in \mathbb{R}^R$. In other words, the vectorization of a symmetric tensor $\ten{A}$ lives in the range of $X$, which is spanned by vectors $x_i\kpr{$d$}$. This requirement puts the known constraint \cite{Comon2006} on the rank of $X$
$$
\textrm{rank}(X) \leq R_{\textrm{max}} \triangleq {d+n-1 \choose n-1},
$$
for which we give a short proof in Lemma \ref{lemma:rankX}. The inverse vectorization operation $\unvect$ reshapes a vectorized tensor back into a tensor $\ten{A}=\unvect{(\vect{(\ten{A})})}$.

\section{Symmetric Tensor Eigen-Rank-One Iterative Decomposition}
\label{sec:steroid}
\subsection{Main Algorithm}
\label{subsec:mainalgo}
We now demonstrate the STEROID algorithm that decomposes a symmetric tensor into a real finite sum of symmetric rank-one outer factors by means of a 4-way tensor. Later on, we then show that STEROID is applicable to any tensor order via an innovative tensor embedding technique. The first step in the STEROID algorithm is to reshape the (4-way) symmetric $\ten{A}\in \mathbb{R}^{n\times n\times n\times n}$ into a 2-way symmetric matrix $A^{(n^2\times n^2)}$, where the bracketed superscript indicates the dimensions. The symmetry of the reshaped $A$ follows trivially from the symmetry of $\ten{A}$. Now the eigendecomposition of $A$ can be computed, which allows us to write
\begin{equation}
A\;=\; \sum_{i=1}^{n^2} \, \lambda_i \, v_i \circ v_i \;=\; \sum_{i=1}^{n^2} \, \lambda_i \, v_i \,v_i^T.
\label{eq:firsteig}
\end{equation}
The symmetry of $A$ implies that the eigenvalues $\lambda_i$ are real and the eigenvectors $v_i$ will be orthonormal. Both eigenvalues and eigenvectors can be computed by for example the symmetric QR algorithm or the divide-and-conquer method \cite{matrixcomputations}. Each of these eigenvectors $v_i$ can now be reshaped into another 2-way symmetric matrix $\bar{v}_i^{(n\times n)}$. It is readily shown that the $\bar{v}_i$ vectors are also symmetric. Specifically, the symmetry of $A$ implies that we can write
\begin{equation}
A \, P \; = \; A,
\label{eq:symmetry1}
\end{equation}
where $P$ is any permutation matrix that permutes the indices $i_ {\frac{d}{2}+1} \ldots i_{d}$. Using the eigendecomposition of $A$, we can rewrite \eqref{eq:symmetry1} as
\begin{equation}
(\lambda_1 \, v_1 v_1^T + \ldots + \lambda_{n^2}\, v_{n^2} v_{n^2}^T ) \, P \; = \; (\lambda_1 \, v_1 v_1^T + \ldots + \lambda_{n^2}\, v_{n^2} v_{n^2}^T ).
\label{eq:symmetry2}
\end{equation}
Left-multiplying \eqref{eq:symmetry2} with the eigenvector $v_i^T \, (i=1,\ldots,n^2)$ of the $i$th term, assuming $\lambda_i \neq 0$, we obtain
\begin{align*}
v_i^T\,(\lambda_1 \, v_1 v_1^T + \ldots + \lambda_{n^2}\, v_{n^2} v_{n^2}^T ) \, P &= v_i^T\,(\lambda_1 \, v_1 v_1^T + \ldots + \lambda_{n^2}\, v_{n^2} v_{n^2}^T ),\\
\Leftrightarrow \lambda_i v_i^T P &= \lambda_i v_i^T,\\
\Leftrightarrow v_i^T P &= v_i^T,
\end{align*}
which implies that any of the eigenvectors $v_i \,(i=1,\ldots,n^2)$ and consequently their reshaped $\bar{v}_i$ inhibit the same symmetry as $A$. The eigendecomposition of each of the symmetric $\bar{v}_i$'s can now also be computed. For example, $\bar{v}_1$ can then be written as
$$
\bar{v}_1 \; = \; \sum_{i=1}^n \, \lambda_{1i} \, v_{1i} \circ v_{1i},
$$ 
where the $v_{1i}$'s are again orthogonal due to the symmetry of $\bar{v}_1$. The whole procedure of repeated eigendecompositions of the reshaped eigenvectors for a $d=4, n=2$ example is depicted in Figure~\ref{fig:ex2222}.
\begin{figure}[ht]
\begin{center}
	\includegraphics[width=.9\textwidth]{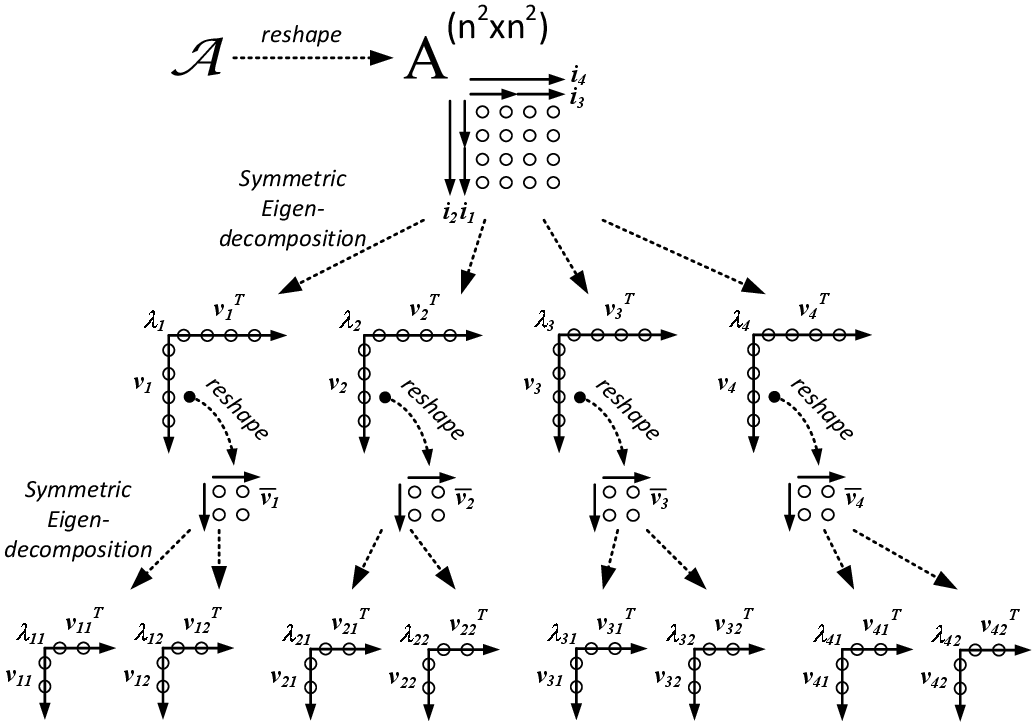}
	\end{center}
	\caption{Successive decompositions of the reshaped $A^{(n^2\times n^2)}$ for the specific case of $n=2$. Note that $\bar{v}_i$'s are always symmetric due to the $4$-way symmetry.}
	\label{fig:ex2222}
\end{figure}

Referring to Figure~\ref{fig:ex2222}, we now take the $k$th term of \eqref{eq:firsteig} and vectorize it to obtain
$$
\vect{(\lambda_k v_k v_k^T)}=\lambda_k v_k\kpr{2}.
$$
Substitution of $v_k$ by its eigendecomposition allows us to write
\begin{align}
\label{eqn:vec_vi}
\lambda_k v_k\kpr{2}=&\lambda_k \left(\lambda_{k1}v_{k1}\kpr{2}+\lambda_{k2}v_{k2}\kpr{2}\right)\otimes \left(\lambda_{k1}v_{k1}\kpr{2}+\lambda_{k2}v_{k2}\kpr{2}\right)\nonumber\\
=&\underbrace{\lambda_k \lambda_{k1}^2 v_{k1}\kpr{4}+\lambda_k \lambda_{k2}^2 v_{k2}\kpr{4}}_{h_k} + \underbrace{\lambda_k \lambda_{k1}\lambda_{k2}\left(v_{k1}\kpr{2}\otimes v_{k2}\kpr{2}+v_{k2}\kpr{2}\otimes v_{k1}\kpr{2}\right)}_{t_k}
\end{align}
where $h_k$ denotes the ``head'' part containing the \emph{pure powers} $v_{k1}\kpr{4},v_{k2}\kpr{4}$ of $v_{k1}$ and $v_{k2}$, respectively, whereas $t_k$ denotes the ``tail'' holding the sum of cross terms. Defining the head tensor $\ten H=\unvect{(\sum_k h_k)}$ and the tail tensor $\ten T=\unvect{(\sum_k t_k)}$, it can be deduced that $\ten T=\ten A-\ten H$ must also be 4-way symmetric since $\ten A$ and $\ten H$ are so. As we will explain further on, the symmetry of the tail tensor $\ten{T}$ is crucial, since it is possible that repeated eigendecompositions of the reshapings of $\ten{T}$ are also necessary in order to compute additional pure power vectors of the STEROID. The objective now is to write $\textrm{vec}(\ten{A})$ as a linear combination
$$
\vect{(\ten{A})} \;=\; l_1 x_1\kpr{4} + \ldots + l_R x_R\kpr{4}. 
$$
Good candidates for the $x_i\kpr{4}$ vectors are the pure powers that span the head part in \eqref{eqn:vec_vi}. No pure power vectors should be considered that come from an eigenvector with corresponding zero eigenvalue $\lambda_k$. Since each of the $x_i$'s is an eigenvector of a symmetric matrix, it also follows that $|| x_i\kpr{4}||_F=1$. Checking whether a decomposition as in \eqref{eq:vecteneigdecomp} exists is done by computing the residual of the following least-squares problem
\begin{equation}
\hat{l} \;=\; \argmin{l}\; ||\vect{(\ten{A})} - X \, l\,||_F,
\label{eq:ls}
\end{equation}
where $X$ is the matrix obtained from the concatenation of all the obtained pure power vectors $x_i\kpr{4}$. It is possible at this step to solve \eqref{eq:ls} with additional constraints. For example, if only positive $l_i$'s are required then one could use a reflective Newton method as described in \cite{Coleman1996}. A sparse solution $\hat{l}$ with as few nonzero $l_i$'s as possible can be computed using L1-regularization \cite{Tibshirani94}. The particular repeated Kronecker product structure for each column of $X$ results in the following upper bound on its rank.
\begin{lemma}
For the matrix $X$ in the least-squares problem \eqref{eq:ls} we have that
$$
\textrm{rank}\,(X) \leq  R_{\textrm{max}} \triangleq {d+n-1 \choose n-1}.
$$
\label{lemma:rankX}
\end{lemma}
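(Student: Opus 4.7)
The plan is to exhibit a linear subspace of $\mathbb{R}^{n^d}$ that contains every column of $X$ and whose dimension is exactly $\binom{d+n-1}{n-1}$; the rank bound then follows immediately.

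First I would observe that each column of $X$ has the form $v_i^{\otimes d}=\vect(v_i\circ v_i\circ\cdots\circ v_i)$, i.e., it is the vectorization of a symmetric rank-1 $d$-way tensor. Let $S\subseteq\mathbb{R}^{n^d}$ denote the image under $\vect$ of the space of cubical symmetric $d$-way tensors of dimension $n$. Because $\vect$ is linear and the symmetry condition $\ten{A}_{i_1\ldots i_d}=\ten{A}_{\pi(i_1\ldots i_d)}$ is a system of linear equalities, $S$ is a linear subspace of $\mathbb{R}^{n^d}$. Every column of $X$ belongs to $S$, so $\textrm{rank}(X)=\dim(\textrm{col}(X))\leq\dim(S)$.

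Next I would compute $\dim(S)$ by a standard counting argument. A symmetric tensor $\ten{A}\in\mathbb{R}^{n\times\cdots\times n}$ is completely determined by the entries $\ten{A}_{i_1\ldots i_d}$ with $1\leq i_1\leq i_2\leq\cdots\leq i_d\leq n$, and these entries can be chosen freely. The number of such weakly increasing multi-indices equals the number of multisets of size $d$ drawn from $\{1,\ldots,n\}$, which by the stars-and-bars formula is $\binom{d+n-1}{n-1}$. Hence $\dim(S)=\binom{d+n-1}{n-1}=R_{\textrm{max}}$.

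Combining the two steps gives $\textrm{rank}(X)\leq R_{\textrm{max}}$, as claimed. There is no real obstacle here; the only thing to be careful about is verifying that the symmetry relations used to bound $\dim(S)$ are indeed linear equalities (so that $S$ is a subspace, not merely a set) and that $\vect$ is an isomorphism onto its image so that dimensions transfer without loss.
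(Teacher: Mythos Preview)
Your argument is correct and is essentially the dual formulation of the paper's own proof. The paper argues via the \emph{row} structure: it observes that the entry of $v_i^{\otimes d}$ at position $(j_1,\ldots,j_d)$ is the monomial $x_{ij_1}\cdots x_{ij_d}$, so two rows of $X$ whose multi-indices are permutations of one another are identical; since there are only $\binom{d+n-1}{n-1}$ distinct degree-$d$ monomials in $n$ variables, $X$ has at most that many distinct rows and hence at most that rank. You instead bound the \emph{column} space by embedding every column in the vectorized symmetric-tensor subspace $S$ and counting $\dim S$ by stars-and-bars. These are the same fact read two ways: ``entries depend only on the multiset of indices'' is precisely what it means to lie in $S$, and the monomial count equals the multiset count. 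Your phrasing is a bit more abstract and avoids the explicit example, while the paper's is more concrete; both reach the bound with no extra machinery.
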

\begin{proof}
Each column of $X$ corresponds with a vector $x_i\kpr{$d$}$, with $x_i \in \mathbb{R}^{n}$. If we label the entries of $x_i$ by $x_{i1},\ldots,x_{in}$ then $x_i\kpr{$d$}$ contains all monomials of degree $d$ in $n$ variables $x_{i1},\ldots,x_{in}$. For example, if $d=2$ and $n=2$, then
$$
x_i\kpr{2} = \begin{pmatrix}
x_{i1}\\
x_{i2}
\end{pmatrix} \otimes \begin{pmatrix}
x_{i1}\\
x_{i2}
\end{pmatrix}\; =\;  \begin{pmatrix}
x_{i1}^2\\
x_{i1}x_{i2}\\
x_{i2} x_{i1}\\
x_{i2}^2
\end{pmatrix}
$$ contains all homogeneous monomials in 2 variables of degree 2. Then for $i=1,\ldots,4$
$$
X \;=\; \begin{pmatrix}
x_{11}^2 	& x_{21}^2 		&x_{31}^2 	& x_{41}^2 \\
x_{11}x_{12}& x_{21}x_{22}	&x_{31}x_{32}& x_{41}x_{42}\\
x_{12}x_{11}& x_{22} x_{21} &x_{32}x_{31}& x_{42} x_{41}\\
x_{12}^2	& x_{22}^2		&x_{32}^2	& x_{42}^2		
\end{pmatrix}
$$
and has a rank of at most ${2+2-1 \choose 2-1}=3$, since the second and the third row are identical. For the general case there are ${d+n-1 \choose n-1}$ distinct homogeneous monomials of degree $d$ in $n$ variables and hence the rank is upper bounded by $R_{\textrm{max}}$.
\quad \end{proof}

Lemma \ref{lemma:rankX} tells us that vec($\ten{A}$) of a symmetric tensor $\ten{A}$ lives in a $R_{\textrm{max}}$-dimensional vector space. Therefore, instead of computing the STEROID, one could randomly generate $R_{\textrm{max}}$ linearly independent vectors, construct their corresponding $X$ matrix and decompose vec($\ten{A}$) along this basis. However, a random basis will most likely result in a decomposition with $R_{\textrm{max}}$ nonzero terms, while the STEROID results in a more compact decomposition, as is illustrated in the following example.

\begin{example}
We construct the following symmetric tensor $\ten{A} \;=\; l_1\,a_1^4 + l_2\,a_2^4 + l_3\,a_3^4$ with $l_1,l_2,l_3$ random real numbers and $a_1,a_2,a_3$ real random $3 \times 1$ vectors. Since $d=4,n=3$, $\textrm{vec}(\ten{A})$ lives in a ${4+3-1 \choose 3-1}= 15$-dimensional vector space $\mathcal{X}$. The STEROID of $\ten{A}$ consists of 9 nonzero terms, while the decomposition with respect to a random basis for $\mathcal{X}$ always generates 15 nonzero terms. Observe that the STEROID is not a canonical symmetric rank-1 decomposition, since the symmetric rank is by construction 3 while the STEROID consists of 9 terms.
\end{example}

From Lemma \ref{lemma:rankX} we learn two things. First, it is possible that not enough pure power vectors are computed to solve the least-squares problem \eqref{eq:ls}. In this case the residual $|| \vect{(\ten{A})} - X \, \hat{l}||_F$ will not be satisfactory and the same procedure of reshapings and eigendecompositions should be applied to the tail tensor $\ten{T}$. This will produce additional pure powers that can be used to extend $X$,  upon which one can solve the least-squares problem \eqref{eq:ls} again. Further iterations on the resulting tail tensor can be applied until a satisfactory residual is obtained. The second thing we learn is that it is also possible that $X$ becomes singular as soon as it has more than ${d+n-1 \choose n-1}$ columns. In this case it is recommended to regularize the least-squares problem such that the obtained solution is not sensitive to perturbations of the tensor $\ten{A}$. This can be done by for example computing the minimum norm solution of \eqref{eq:ls}. The whole STEROID algorithm for tensors with $d=2^k \,(k \in \mathbb{N})$ is summarized in pseudo-code in Algorithm \ref{alg:steroid}. Matlab/Octave implementations can be freely downloaded from \url{https://github.com/kbatseli/STEROID}.\\
\\
\framebox[.99\textwidth][l]{\begin{minipage}{0.99\textwidth}
\begin{algorithm}STEROID algorithm\\
\textit{\textbf{Input}}: symmetric $d$-way tensor $\ten{A}$ with $d=2^k, k\in \mathbb{N}$, tolerance $\tau$\\
\textit{\textbf{Output}}:\makebox[0pt][l]{ pure power vectors $x_i$, $\hat{l}$}
\begin{algorithmic}
\STATE $\bar{\ten{A}} \gets$ reshape $\ten{A}$ into a $n^{d/2} \times n^{d/2}$ matrix
\STATE $V_1,D_1 \gets$ eig($\bar{\ten{A}}$)
\FOR{all eigenvectors $V_1(:,i)$ with $\lambda_i \neq 0$}
\STATE recursively reshape $V_1(:,i)$ and compute its eigendecomposition
\ENDFOR
\STATE compute head tensor $\ten{H}$ from recursive eigendecompositions of $\ten{A}$
\STATE $\ten{T} \gets \ten{A}$
\STATE collect all pure powers into $X$
\STATE solve least-squares problem $\hat{l}=\argmin{l}\; ||\vect{(\ten{A})} - X \, l||_F$
\WHILE{$||\vect{(\ten{A})} - X \, \hat{l}||_2  > \tau$ AND $\textrm{rank}\,(X) < R_{\textrm{max}}$}
\STATE $\ten{T} \gets \ten{T}-\ten{H}$
\STATE \makebox[0pt][l]{add additional pure powers to $X$ by recursive eigendecompositions of $\ten{T}$}
\STATE compute new head tensor $\ten{H}$ from recursive eigendecompositions of $\ten{T}$
\STATE extend $X$ with additional pure powers
\STATE solve least-squares problem $\hat{l}=\argmin{l}\; ||\vect{(\ten{A})} - X \, l||_F$
\ENDWHILE
\end{algorithmic}
\label{alg:steroid}
\end{algorithm}
\end{minipage}}\\
\\
Every matrix from which an eigendecomposition is computed in Algorithm \ref{alg:steroid} is symmetric. This implies that the computed eigenvalues are up to a sign equal to the singular values. Hence the same kind of tolerance as for the singular values can be used to determine whether any of the $\lambda_i$'s are numerically zero \cite{matrixcomputations}. Note that a user-defined tolerance $\tau$ is required to check whether additional iterations on the tail tensor $\ten{T}$ are required.

The computational complexity of the method is dominated by the very first eigendecomposition of the $n^{d/2}\times n^{d/2}$ matrix $A$ and by solving the least-squares problem \eqref{eq:ls}. The first eigendecomposition requires a tridiagonalization of $A$, which requires $8/3\,n^{3d/2}$ flops and dominates the cost. For the actual diagonalization, QR iterations or the divide-and-conquer method can be used. The matrix $X$ in the least-squares problem also determines the computational cost. Its number of rows is $n^d$ and we can assume its number of columns to be $R_{\textrm{max}}$. Solving the least-squares problem with the SVD of $X$ then sets the maximal computational complexity to $2R_{\textrm{max}}^{2}(n^d-R_{\textrm{max}}/3)$. In Section \ref{sec:optimizations} we discuss two ways in which the size of the least-squares problem can be significantly reduced by exploiting the symmetry of $\ten{A}$ and the structure of $X$.

It is clear that the STEROID algorithm presented in Algorithm \ref{alg:steroid} only works for symmetric tensors for which the order is a power of 2. Indeed, this is a necessary requirement such that the recursive reshapings in the algorithm always lead to a square symmetric matrix. Fortunately, by employing an embedding procedure one can compute the STEROID of a symmetric tensor $\ten{A}$ of any order. We now discuss this embedding procedure in the next section.

\subsection{Tensor Embedding}
\label{subsec:embed}
The embedding procedure presented in this section allows us to compute the STEROID for a symmetric tensor of an arbitrary order. Algorithm \ref{alg:steroid} relies on the reshaping of the tensor into a square matrix and therefore the order of the tensor should be divisible by two. If the order $d$ is odd, then one can embed the tensor into a symmetric tensor of order $d+1$, reshape it again into a square matrix and continue Algorithm \ref{alg:steroid}. We now illustrate the embedding procedure with a 3-way symmetric tensor $\ten{A}$ of dimension 2 into a symmetric tensor $\ten{B}$ of order 4. Since $\ten{A}$ is symmetric, it only has 4 distinct entries, viz. $\ten{A}_{111},\ten{A}_{211}(=\ten{A}_{121}=\ten{A}_{112}),\ten{A}_{221}(=\ten{A}_{212}=\ten{A}_{122}),\ten{A}_{222}$. The idea now is to consider $\ten{A}$ as the frontal ``slice" of $\ten{B}$ in the following straightforward manner
\begin{align*}
\ten{A}_{111} &\Rightarrow \ten{B}_{1111},\\
\ten{A}_{211} &\Rightarrow \ten{B}_{2111},\\
\ten{A}_{221} &\Rightarrow \ten{B}_{2211},\\
\ten{A}_{222} &\Rightarrow \ten{B}_{2221}.
\end{align*}
In order to make sure that $\ten{B}$ is symmetric, one needs to enforce the following equalities
\begin{align*}
\ten{B}_{2111} &= \ten{B}_{1211}= \ten{B}_{1121}= \ten{B}_{1112},\\
\ten{B}_{2211} &= \ten{B}_{2121}= \ten{B}_{2112}= \ten{B}_{1221}=\ten{B}_{1212}=\ten{B}_{1122},\\
\ten{B}_{2221} &= \ten{B}_{2212}=\ten{B}_{2122}=\ten{B}_{1222}.
\end{align*}
All other entries of $\ten{B}$, in this example $\ten{B}_{2222}$, can be set to zero. We now have a symmetric $\ten{B}$ with $\ten{B}_{i_1i_2i_31}=\ten{A}_{i_1i_2i_3}$. The general embedding algorithm is presented in pseudo-code in Algorithm \ref{alg:embed}.\\
\\
\framebox[.99\textwidth][l]{\begin{minipage}{0.99\textwidth}
\begin{algorithm}symmetric tensor embedding algorithm\\
\textit{\textbf{Input}}: symmetric $d$-way cubical tensor $\ten{A}$ with $d$ an odd number\\
\textit{\textbf{Output}}:\makebox[0pt][l]{ symmetric $d+1$-way cubical tensor $\ten{B}$ with $\ten{B}_{i_1\ldots i_d1}=\ten{A}_{i_1\ldots i_d}$.}
\begin{algorithmic}
\STATE initialize $\ten{B}$ with zeros
\FOR{all nonzero $\ten{A}_{i_1\ldots i_d}$ }
\FOR{all permutations $\pi(i_1\ldots i_d 1)$}
\STATE $\ten{B}_{\pi(i_1\ldots i_d1)} \gets \ten{A}_{i_1\ldots i_d}$
\ENDFOR
\ENDFOR
\end{algorithmic}
\label{alg:embed}
\end{algorithm}
\end{minipage}}\\
\\
\\
Using Algorithm \ref{alg:embed}, it now becomes possible to adjust the STEROID algorithm such that it works for a symmetric tensor of any order. Indeed, if the order $d$ is odd, then application of Algorithm \ref{alg:embed} guarantees that the new symmetric tensor can be reshaped into a square matrix. Similarly, the obtained eigenvectors can be embedded if necessary. The following example illustrates the STEROID algorithm with embedding.
\begin{example}
Suppose we have a symmetric tensor $\ten{A} \in \mathbb{R}^{n \times \cdots \times n}$ of order $d=5$. Since $d$ is odd, it is not possible to reshape $\ten{A}$ into a square matrix. Application of Algorithm \ref{alg:embed} returns a symmetric tensor $\ten{B}$ of order $6$ such that $\ten{B}_{i_1\ldots i_d 1}=\ten{A}_{i_1\ldots i_d}$. The tensor $\ten{B}$ is then reshaped into a symmetric $n^3 \times n^3$ matrix and its eigendecomposition is computed. Each obtained eigenvector corresponding with a nonzero eigenvalue can only be reshaped into a tensor of order 3, hence the embedding has to be applied again. One can then reshape each eigenvector into a symmetric $n^2 \times n^2$ matrix and compute its eigendecomposition. Finally, the obtained eigenvectors corresponding with a nonzero eigenvalue are reshaped into a symmetric $n \times n$ matrix and the final eigendecomposition is computed, from which we obtain the pure power vectors.
\end{example}




\section{Reducing the size of the least-squares problem}
\label{sec:optimizations}
The scalar factors $l$ in the linear combination \eqref{eq:tensoreigdecomp} are found as the solution of the least-squares problem
$$
\hat{l} \;=\; \argmin{l}\; ||\vect{(\ten{A})} - X \, l||_F.
$$
Since each column of $X$ is a $x_i\kpr{$d$}$ vector, the total number of rows is $n^d$. The number of columns of $X$ is determined by the total number of nonzero eigenvalues in the STEROID but is in practice much less than the number of rows. The feasibility of solving the least-squares problem will therefore be largely determined by the $n^d$ number of rows of the $X$ matrix, requiring large amounts of memory. In this section we discuss two effective methods to reduce the size of $X$ and thus alleviate the memory requirement. The first method exploits the symmetry of $\ten{A}$ directly, the second method exploits the structure of the $X$ matrix to efficiently compute $X^T\,X$.

\subsection{Exploiting the symmetry of $\ten{A}$}
The symmetry of $\ten{A}$ implies that many rows of $X$ will be identical. In fact, only ${d+n-1 \choose n-1}$ rows are unique due to Lemma \ref{lemma:rankX}. If $S$ is the row selection matrix that selects the ${d+n-1 \choose n-1}$ unique rows from $X$ then \eqref{eq:ls} can be rewritten as the mathematically equivalent problem
\begin{equation}
\hat{l} \;=\; \argmin{l}\; ||S\,\vect{(\ten{A})} - S\,X \, l||_F.
\end{equation}
The number of rows of $X$ are then reduced with a factor of
$$
\gamma \triangleq \frac{n^d}{{d+n-1 \choose n-1}} \;=\; \frac{n^d \, (n-1)!}{(d+n-1) \cdots (d+1)},
$$
which grows exponential in $d$. Figure \ref{fig:gain} demonstrates the reduction factor $\gamma$ as a function of $n$ for different orders $d$. It can be seen that the reduction in number of rows first increases exponentially for small $n$ and then quickly `saturates' to an almost constant factor.

\begin{figure}[ht]
\centering
\includegraphics[width=.99\textwidth]{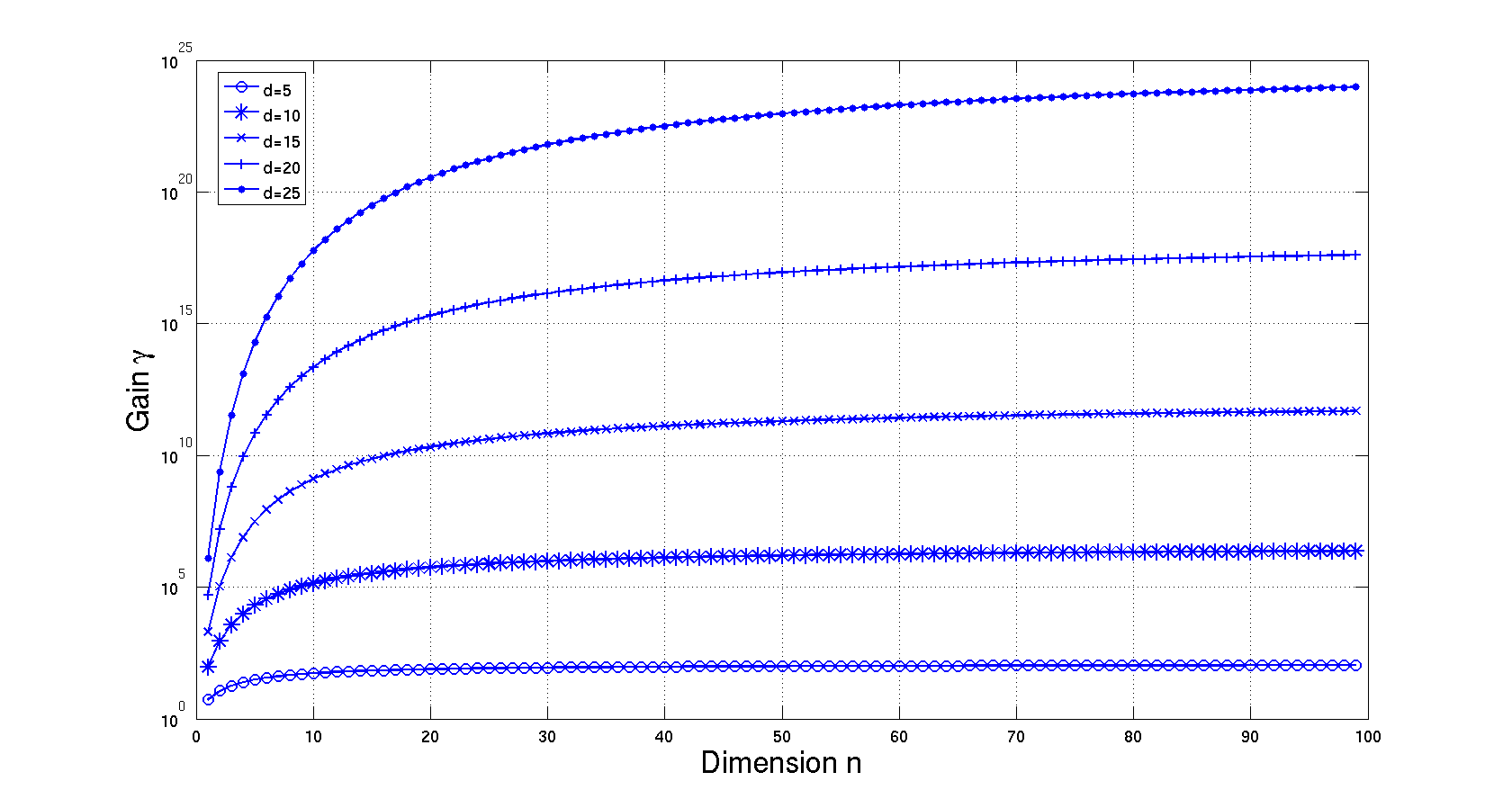}
\caption{{The reduction in number of rows of $X$ as a function of $n$ for different values of $d$.}}
\label{fig:gain}
\end{figure}

Although massive savings can be achieved by exploiting the symmetry of $\ten{A}$, $S\,X$ still has ${d+n-1 \choose n-1}= d^{n-1}/(n-1)!+O(d^{n-2})$ rows, which grows exponential in $n$. This implies that even for moderate $d$, \eqref{eq:ls} will quickly become infeasible for increasing $n$. In the next section we discuss how the size of the least-squares problem can be further reduced by exploiting the particular structure of $X$. This will come, however, at the cost of a squared condition number when solving \eqref{eq:ls}.

\subsection{Exploiting the structure of $X$}
The matrix $X$ is typically very thin, with much more rows than columns. We assume in this section that $X$ is of full column rank. One straightforward way then to reduce the size of the matrix is to left-multiply with $X^T$ to obtain
$$
X^T\, X\,l \;=\; X^T\,\textrm{vec}(\ten{A}).
$$
Since $X$ is of full column rank, $l$ will be unique and $X^T\,X$ will be symmetric and positive definite. This means that in addition to getting rid of the $n^d$ rows, only half of the entries of $X^T\,X$ need to be stored. This comes at the cost of a squared condition number $\kappa(X^T\,X)=\kappa(X)^2$, where $\kappa(X)$ denotes the condition number of the matrix $X$.

The matrix $X^T\,X$ can also be constructed without explicitly constructing $X$, what is to be avoided in the first place. Indeed, each column of $X$ is a repeated Kronecker product of a pure power vector $x_i\kpr{$d$}$. Each element of $X^T\,X$ is therefore an inner product $x_i\kpr{$d$}\,x_j\kpr{$d$}$, which can be rewritten as
$$
x_i^T x_j \otimes x_i^T x_j \otimes \ldots \otimes x_i^Tx_j \;=\; (x_i^Tx_j)^d.
$$
This allows us to construct $X^T\,X$ without the explicit construction of $X$ as described in the following lemma.
\begin{lemma}
\label{lemma:XTX}
If $V$ is the matrix that consists of the pure power vectors obtained from the STEROID, then $X^T\,X$ is constructed from $(V^T\,V).^d$, where $.^d$ denotes the entrywise operation of raising to the power $d$.
\end{lemma}

\section{Numerical examples}
\label{sec:examples}
In this section we demonstrate the STEROID algorithm on different examples. All examples were run in Matlab \cite{MATLAB:2012} on a 64-bit desktop computer with 4 cores @ 3.30 GHZ and 16 GB of memory. The first example illustrates the different steps of the STEROID algorithm on a small symmetric tensor. The second example illustrates the case where more than one STEROID iteration is required to obtain the full decomposition. In Example 3, we demonstrate the impact of the two methods to reduce the size of the least-squares problem on the residuals and run times of the STEROID algorithm. Finally, we compare the STEROID algorithm with the output of the Jacobi algorithm \cite{Dooren2013}, Regalia's iterative symmetric tensor approximation algorithm \cite{Regalia2013875} and the CANDECOMP algorithm from the Tensorlab toolbox \cite{tensorlab}. It is important to realize that in contrast to other methods, the STEROID algorithm does not require any initial guess and, as illustrated in Example 3, can handle large problems.

\subsection{Example 1: STEROID algorithm illustration on a $2\times 2\times 2$ symmetric tensor}
We first demonstrate the STEROID on a simple symmetric 3rd-order tensor
$$\ten{A}_{111}=24,\ten{A}_{211}=18,\ten{A}_{221}=12,\ten{A}_{222}=6,$$
which we first need to extend to a 4th-order symmetrical cubical tensor $\ten{B}$ using Algorithm \ref{alg:embed}. The next step of the STEROID algorithm is to reshape $\ten{B}$ into the following $4\times 4$ symmetric matrix
$$
B \;=\;
\begin{pmatrix}
24 & 18 & 18 & 12\\
18 & 12 & 12 & 6\\
18 & 12 & 12 & 6\\
12 & 6 & 6 & 0
\end{pmatrix}
$$
and compute its eigendecomposition
$$
B \;= \; V \begin{pmatrix}
-5.3939 & 0 & 0 & 0\\
0 & -6.29\times 10^{-15} & 0 & 0\\
0 & 0 & 2.64\times 10^{-15} &  0\\
0 & 0 & 0 & 53.3939\\
\end{pmatrix} \, V^T.
$$
Since $B$ has 2 eigenvalues that are numerically zero, we only need to proceed with the eigenvectors $V(:,1)$ and $V(:,4)$, where we used MATLAB notation to denote the first and fourth columns of $V$. Reshaping both $V(:,1)$ and $V(:,4)$ into a symmetric $2\times 2$ matrix and computing their eigendecomposition results in the following 4 pure power vectors
$$
x_1=\begin{pmatrix}
-0.9939\\
0.1103
\end{pmatrix},
x_2=\begin{pmatrix}
-0.1103\\
-0.9939
\end{pmatrix},
x_3=\begin{pmatrix}
-0.8396\\
-0.5431
\end{pmatrix},
x_4=\begin{pmatrix}
0.5431\\
-0.8396
\end{pmatrix}.
$$
Solving the least-squares problem
$$
\hat{l} \;=\;\argmin{l}\; ||\vect{(\ten{A})} - \begin{pmatrix}
x_1\kpr{3} & x_2\kpr{3}  & x_3\kpr{3}& x_4\kpr{3} 
\end{pmatrix}\,l||_F,
$$
results in
$$
\hat{l}\;=\;\begin{pmatrix}
3.9934\\
0.6922\\
-46.79\\
1.3916
\end{pmatrix},
$$
with a residual of $1.8546\times 10^{-14}$. Observe that the total number of terms in the computed decomposition equals the upper bound ${3+2-1 \choose 2-1}=4$. The total run time to compute the decomposition was $7.5\times 10^{-4}$ seconds. 

Another interesting example, that can be found in \cite{Comon2006}, is the symmetric tensor defined by
$$\ten{A}_{111}=-1,\ten{A}_{221}=1.$$
The STEROID algorithm computes the following decomposition
$$
\ten{A} \;=\; -2 \begin{pmatrix}
1\\
0
\end{pmatrix}^\kpr{3}
 - 1.4142 \begin{pmatrix}
-0.7071\\
0.7071
\end{pmatrix}^\kpr{3}
+1.4142 \begin{pmatrix}
0.7071\\
0.7071
\end{pmatrix}^\kpr{3}
$$
in $0.0012$ seconds. This is the same decomposition as given in \cite{Comon2006}.


\subsection{Example 2: Second STEROID iteration on tail tensor $\ten{T}$}
Consider a random symmetric tensor $\ten{A} \in \mathbb{R}^{7 \times 7 \times 7 \times 7}$ with integer entries between 24 and 100. The STEROID algorithm returns 196 pure power vectors. The rank of $X$ is upper bounded by ${4+7-1 \choose 7-1}=210$ and not surprisingly we have a residual of $70.2320$, which indicates that additional pure power vectors are required. Running Algorithm \ref{alg:steroid} on the tail tensor $\ten{T}$ returns an additional 189 pure power vectors. Solving the least-squares problem \eqref{eq:ls} with all 385 pure power vectors results in a residual of $1.49\times 10^{-11}$ and 167 nonzero entries in $l$. The upper bound of 210 on the rank of $X$, together with the set of 385 pure power vectors implies that there is distinct non-uniqueness in the decomposition.

\subsection{Example 3: Comparison between original STEROID algorithm, exploitation of symmetry and using $X^TX$}
For this numerical experiment six random symmetric tensors with dimension $n=5$ and orders $d=3$ up to $d=8$ were generated. Columns two and three of Table \ref{tbl:xvectors} list the total number of computed eigendecompositions and the total number of required embeddings for each value of $d$ in the STEROID computation. Columns four and five list the total run times in seconds for computing all eigendecompositions and doing the tensor embeddings. The rightmost column lists the total time required to compute the pure power $x_i$ vectors of the STEROID and is the sum of the entries in the fourth and fifth column. From Table \ref{tbl:xvectors} it can be seen that, unless the order is a power of two, the main contribution in the total time required to compute the pure power vectors comes from the embedding procedure. The 35 embeddings for $d=6$ take about half the amount of time as the 31 embedding for $d=5$. This is explained by the fact that $d=5$ requires 1 embedding from order 5 to 6 and 30 embeddings from order 3 to 4, while for $d=6$ there are 35 embeddings from order 3 to 4. Embedding a symmetric tensor from order 5 to order 6 is a much more time-consuming process than from order 3 to 4. The largest run time is observed for $d=7$, which requires only 1 embedding from order 7 to order 8.

\begin{table}[ht]
\begin{center}
\caption{Number of eigendecompositions and embeddings and their respective total run times.}   
\begin{tabular}{@{}rrrrrr@{}}
 & total number &  total number & total time & total time & total time\\ 
 & of eigs & of embeddings & eigs& embedding & $x_i$ vectors\\
d  &   &  & [seconds] & [seconds] & [seconds]\\ \midrule
$3$& $11$ & $1$& $0.0008$ & $0.4091$ & $0.4099$\\   [1.ex]            
 $4$& $16$ & $0$& $0.0012$ & $0$ & $0.0012$\\   [1.ex]            
$5$& $181$ & $31$& $0.0392$ & $19.5757$ & $19.6149$\\   [1.ex]            
 $6$& $386$ & $35$& $0.0449$ & $9.5274$ & $ 9.5714$\\   [1.ex]            
$7$& $606$ & $1$& $1.0092$ & $458.3181$ & $459.3273$\\   [1.ex]            
 $8$& $1184$ & $0$& $0.9643$ & $0$ & $0.9643$\\   [1.ex]            
\end{tabular}
\label{tbl:xvectors}          
\end{center}
\end{table}

Once all $x_i$ vectors are computed, we solve the least-squares problem \eqref{eq:ls} in three different ways: original (no reduction of the $X$ matrix), symmetry (exploiting the symmetry of $\ten{A}$), $X^TX$ (computes $X^TX$ using Lemma \ref{lemma:XTX}). Table \ref{tbl:LS} lists the residuals $||\ten{A}-X\hat{l}||_F$ and total run times in seconds for each of the three methods. The residuals for the $X^TX$ method are only slightly worse than the other two methods due to the squared condition number. This implies that the condition numbers of $X$ were relatively small. The difference in run time is more pronounced for higher orders. Most apparent is the saving with a factor of 6942 in run time between the original and symmetry exploiting methods when $d=8$. Exploiting the symmetry reduces the total number of rows of $X$ from $5^8=390625$ down to ${8+5-1 \choose 5-1}=495$. This reduces $X$ to a $495 \times 5550$ matrix of rank 495. Using the $X^T\,X$ method when $d=8$ reduces the run time with a factor 1174 but is clearly not as good as exploiting the symmetry. The reason for this difference lies in the fact that 5550 pure power vectors are computed and therefore $X^T\,X$ is a $5550 \times 5550$ matrix, compared to the $495 \times 5550$ matrix when symmetry is exploited. 

\begin{table}[ht]
\begin{center}
\caption{Residuals and run times for solving the least-squares problem \eqref{eq:ls}.}
\begin{tabular}{@{}rrrrrrr@{}}
$d$ & \multicolumn{2}{c}{original} & \multicolumn{2}{c}{symmetry} & \multicolumn{2}{c}{$X^TX$}\\ \midrule
& $||\ten{A}-X\hat{l}||_F$ & run time & $||\ten{A}-X\hat{l}||_F$ & run time& $||\ten{A}-X\hat{l}||_F$ & run time\\
&  & [seconds] &   & [seconds] &   & [seconds]\\\midrule
$3$& \num{6.38e-15} & $0.0003$ & \num{6.45e-15}  & $0.0002$ & \num{3.71e-14}& $0.0337$\\

$4$& \num{7.25e-14} & $0.0024$ & \num{7.82e-15} & $0.0003$ & \num{5.65e-13} & $0.0256$\\

$5$& \num{5.20e-13} & $0.3866$ & \num{1.06e-14} & $0.0189$ & \num{1.91e-12}& $0.0555$\\

$6$& \num{1.86e-12} & $8.5901$ & \num{1.81e-14} & $0.0685$ & \num{2.17e-11}& $0.2721$\\

$7$& \num{2.46e-11} & $186.5544$ & \num{3.30e-14} & $0.6224$ & \num{8.36e-12}& $1.9014$\\

$8$& \num{1.42e-10} & $13312.67$ & \num{2.97e-14} & $1.9176$& \num{2.16e-10} & $11.338$\\
\end{tabular}
\label{tbl:LS}
\end{center}
\end{table}

\subsection{Experiment 4: Comparison with other iterative methods}
In this numerical experiment, we apply the Jacobi \cite{Dooren2013}, Regalia's iterative symmetric tensor approximation algorithm \cite{Regalia2013875} and the CANDECOMP-algorithm of the Tensorlab toolbox \cite{tensorlab} to the symmetric tensors of Experiment 3. The Matlab implementation of the Jacobi algorithm was provided by Dr.  Mariya Ishteva. This Jacobi method implementation only works for 3rd-order tensors. We implemented Regalia's iterative algorithm and confirmed its results with the numerical experiments described in \cite{Regalia2013875}. The Tensorlab toolbox is freely available. For a given multilinear rank $r$, the Jacobi and Regalia's method return an orthogonal $n \times r$ matrix U and symmetric tensor core $\ten{S} \in \mathbb{R}^{n \times \ldots \times n}$ that minimize $ ||\ten{A}- \ten{S} \times_1 U \times_2 \cdots \times_d U||_F$. The maximal number of columns of $U$ is therefore limited to $n$. This implies that for a fully dense 3rd-order core tensor $\ten{S}$ one will have $1,4,10,20,35$ respective number of terms in the decomposition for $r=1,2,3,4,5$. Table \ref{tbl:jacobi} lists the total number of required iterations, the residual and total run time for the Jacobi method applied for all possible values of $r$. The initial orthogonal matrices to start the iterations were obtained from applying a QR orthogonalization on a random $n \times r$ matrix. The full Tucker decomposition is obtained for $r=5$ and consists of 32 nonsymmetric and 3 symmetric terms. In contrast, the STEROID consists of 50 symmetric terms and is obtained more than 3 times faster when symmetry is exploited.

\begin{table}[ht]
\begin{center}
\caption{Number of iterations, residuals and run times for the Jacobi method.}   
\begin{tabular}{@{}rrrr@{}}
    & total number &  &  total runtime  \\   
r  & of iterations & $ ||\ten{A}- \ten{S} \times_1 U \times_2 \cdots \times_d U||_F$ & [seconds]    \\ \midrule  
 $1$& $38$ & $5.28$& $1.38$ \\             
 $2$& $118$ & $4.63$& $2.02$ \\             
$3$& $111$ & $3.47$& $2.46$ \\               
 $4$& $55$ & $1.83$& $1.16$ \\            
$5$& $1$ & \num{7.22e-15}& $0.01$ \\   
\end{tabular}
\label{tbl:jacobi}          
\end{center}
\end{table}
Regalia's iterative method is not limited to 3rd-order tensors and is therefore applied to all symmetric tensors of Experiment 3. Since we are interested in a full decomposition we set $r=5$ and therefore obtain ${3+5-1 \choose 5-1},\ldots,{8+5-1 \choose 5-1}$ terms for every respective decomposition. The initial orthogonal matrices to start the iterations were obtained from applying a QR orthogonalization on a random $n \times 5$ matrix. Table \ref{tbl:regalia} lists the total number of required iterations, the residual and total run time for each symmetric tensor from Experiment 3. Iterations were stopped when the difference in consecutive orthogonal vectors over 2 iterations was smaller than \num{1e-10}. An additional parameter $\gamma$ to ensure monotonic convergence was set to $20$. For $d=8$, the algorithm failed to finish due to a lack of sufficient memory. Compared to the symmetry exploiting STEROID algorithm, the total run time of Regalia's iterative method is up to 35 times slower.
\begin{table}[ht]
\begin{center}
\caption{Number of iterations, residuals and run times for Regalia's iterative method, $r=5$.}   
\begin{tabular}{@{}rrrr@{}}
   & total number &  &  total runtime  \\   
d  & of iterations & $ ||\ten{A}- \ten{S} \times_1 U \times_2 \cdots \times_d U||_F$ & [seconds]    \\   \midrule 
 $3$& $72$ & \num{1.78e-15}& $0.2034$ \\            
 $4$& $69$ & \num{1.24e-14}& $1.2668$\\            
$5$& $58$ & \num{4.08e-14}& $8.4662$ \\            
 $6$& $98$ & \num{3.27e-13} & $314.2224$\\              
$7$& $251$ & \num{1.7373e-12} & $16301.7894$\\             
 $8$& NA & NA& NA\\   
\end{tabular}
\label{tbl:regalia}          
\end{center}
\end{table}

Finally, the CANDECOMP-algorithm from Tensorlab is applied to all symmetric tensors of Experiment 3. This algorithm allows each rank-1 term to be symmetric as well. In Table \ref{tbl:candecomp} the total number of computed terms, the residual and total run time in seconds are listed. As with Regalia's iterative method, for $d=8$ the algorithm also fails due to the intermediate result being too large. Since we are interested in a full decomposition we set the total number of desired rank-1 terms equal to the number of terms obtained from the STEROID. This does surprisingly not result in small residuals for the CANDECOMP-algorithm. The total run time is highly variable over the different orders and for the $d=6$ case 4 times slower compared to the symmetry exploiting STEROID algorithm.

\begin{table}[ht]
\begin{center}
\caption{Number of terms, residuals and run times for Tensorlab's iterative method.}   
\begin{tabular}{@{}rrrr@{}}
   & total number &  &  total runtime  \\   
d  & of terms $R$ & $ ||\ten{A}-\sum_{i=1}^R a_i^d||_F $ & [seconds]    \\ \midrule  
 $3$& $50$ & \num{ 2.7187e-09}& $0.1480$ \\             
 $4$& $75$ & \num{ 5.8317}& $9.1233$\\              
$5$& $750$ & \num{ 1.1831e-05}& $21.4996$ \\             
 $6$& $1750$ & \num{12.7366} & $1793.0454$\\          
$7$& $2675$ & \num{0.0909} & $802.7768$\\            
 $8$& $5550$ & NA& NA\\   [1.ex]    
\end{tabular}
\label{tbl:candecomp}          
\end{center}
\end{table}

\section{Application}
\label{sec:app}
In this section we show how STEROID readily solves a problem in nonlinear block-structured system identification \cite{giri2010block} and nonlinear state-space identification \cite{paduart2010identification}. As illustrated in Figure \ref{fig:decoupling}, the goal is to recover the internal structure of an identified static polynomial mapping
\begin{equation}
\left\{ \begin{array}{ccl}
y_1(t) &=& f_1(u_1(t),\ldots,u_p(t)),\\
\vdots \\
y_l(t) &=& f_l(u_1(t),\ldots,u_p(t)),
\end{array} \right.
\label{eq:polysys}
\end{equation}
which relates the $p$ inputs $u_1(t),\ldots,u_p(t)$ to $l$ outputs $y_1(t),\ldots,y_l(t)$. This internal structure is determined by writing each of these multivariate polynomials $f_1,\ldots,f_l$ as a linear combination of univariate polynomials $g_j(x_j)$
\begin{equation}
f_i \;=\; \sum_{j=1}^n l_{ij} \, g_j(x_j) \quad (l_{ij} \in \mathbb{R})
\label{eq:internal1}
\end{equation}
where each $x_j$ is an affine transformation of the inputs
\begin{equation}
x_j =  b_j + \sum_{k=1}^p t_{jk} u_k. \quad (b_j,t_{jk} \in \mathbb{R}).
\label{eq:internal2}
\end{equation}
The scalars $b_j$ are typically called the bias or threshold. A slightly different version of this problem has been solved in \cite{dreesen2014decoupling,usevichdecomposing}, where the conversion of the inputs $u_1,\ldots,u_p$ to the states $x_1,\ldots,x_n$ happens by means of a linear transformation.

\begin{figure}[bh]
\centering
\includegraphics[width=.99\textwidth]{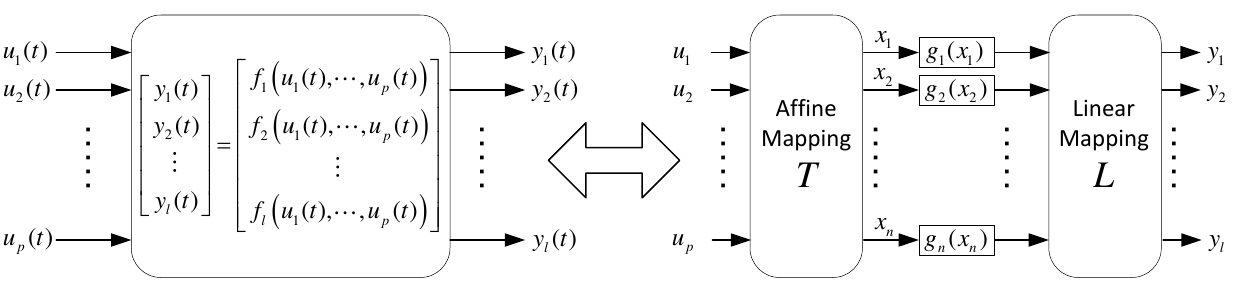}
\caption{{The polynomial mapping $f_1,\ldots,f_l$ is decoupled into a set of parallel univariate polynomials $g_1,\ldots,g_n$ by means of an affine transformation $T$ and linear transformation $L$.}}
\label{fig:decoupling}
\end{figure}

The coefficients $b_j,t_{jk}$ will turn out to be the entries of the eigenvectors $v_j$ of a STEROID and similarly the $l_{ij}$ coefficients are the real $l$ coefficients of a STEROID. We now show how this comes about. It is important to observe here that we replace our earlier notation of $x_j$ as the eigenvectors computed by the STEROID algorithm with $v_j$ in order to avoid confusion with the internal state variables $x_j$ of the nonlinear system. Let $d$ be the maximal total degree of the polynomial system \eqref{eq:polysys}. In order to estimate all coefficients $b_j,t_{jk}$, we first need to make sure that each of the polynomials \eqref{eq:polysys} is homogeneous of degree $d$. This is achieved by introducing the homogenization variable $u_0(t)$, which satisfies $u_0(t)=1\, \forall \,t\, \in \mathbb{R}$. For example, if we have $d=4$ and the polynomial $f_1=u_1^2+5u_1u_2-9$, then its homogenization is $f_1^h= u_0^2u_1^2+5u_0^2u_1u_2-9u_0^4$. The natural isomorphism between homogeneous polynomials and symmetric tensors allows us then to write the homogeneous polynomials $f_1^h,\ldots,f_l^h$ as symmetric tensors $\ten{F}_1,\ldots,\ten{F}_l$. Each of these tensors is of order $d$ and has dimension $p+1$, due to the extra homogenization variable. From the application of the STEROID Algorithm onto each symmetric tensor $\ten{F}_1,\ldots,\ten{F}_l$, a set of pure power vectors $v_j$ is obtained. From these vectors $v_j$ a basis $X$ can be constructed such that each symmetric tensor $\ten{F}_1,\ldots,\ten{F}_l$ can be decomposed in terms of this basis. The $l_{ij}$ coefficients are then found from solving the least-squares problems
$$
l_{i} \; = \;\argmin{l}\; ||\vect{(\ten{F}_i)} - X \, l||_F \quad (i=1,\ldots,l).
$$
The STEROID decomposition
$$
\ten{F}_i \; = \; \sum_{j=1}^N l_{ij} \, v_j^d,
$$
can then be written in terms of homogeneous polynomials as
\begin{equation}
f_i^h \; = \; \sum_{j=1}^N l_{ij} \, (\sum_{k=0}^p t_{jk} u_k)^d.
\label{eq:fkh}
\end{equation}
Setting the homogenization variable $u_0(t)\triangleq 1$ effectively de-homogenizes all homogeneous polynomials $f_i^h$ into
\begin{equation}
f_i \; = \; \sum_{j=1}^N l_{ij} \, (t_{i0} + \sum_{k=1}^p t_{jk} u_k)^d.
\label{eq:fi}
\end{equation}
By retaining the $n$ vectors $v_j$ corresponding with nonzero $l_{ij}$'s over all $i$'s and introducing the definitions $g_j(x_j) \triangleq x_j^d$ and $b_j \triangleq t_{i0}$ into \eqref{eq:fi}, the problem of reconstructing the internal structure of the nonlinear system as given by \eqref{eq:internal1} and \eqref{eq:internal2} is completely solved. The whole algorithm is summarized in pseudo-code in Algorithm \ref{alg:nonlinearsysid}.\\
\\
\framebox[.99\textwidth][l]{\begin{minipage}{0.99\textwidth}
\begin{algorithm}nonlinear block-structured system identification\\
\textit{\textbf{Input}}: multivariate polynomials $f_1,\ldots,f_l$\\
\textit{\textbf{Output}}:\makebox[0pt][l]{ affine transformation $T$, linear transformation $L$}
\begin{algorithmic}
\STATE $d \gets$ maximal total degree of $f_1^h,\ldots,f_l^h$
\STATE homogenize all $f_1,\ldots,f_l$ into $f_1^h,\ldots,f_l^h$ of degree $d$
\FOR{$i=1,\ldots,l$}
\STATE $\ten{F}_i \gets $ symmetric tensor corresponding with $f_i^h$
\STATE $V_i \gets $ STEROID($\ten{F}_i$)
\ENDFOR
\STATE $X \gets $ construct basis from all $V_i$ vectors
\FOR{$i=1,\ldots,l$}
\STATE $l_{i} = \argmin{l}\; ||\vect{(\ten{F}_i)} - X \, l||_F$
\ENDFOR
\STATE $L \gets $ collect all nonzero $l_{ij}$ coefficients
\STATE $T \gets $ retain only $n$ vectors from $V$ corresponding with $L$
\end{algorithmic}
\label{alg:nonlinearsysid}
\end{algorithm}
\end{minipage}}\\
\\
The following example illustrates the whole identification algorithm in detail.
\begin{example}
Consider the nonlinear 2-input-2-output system described by the polynomials $f_1,f_2$ of total degree $d=3$
$$
\begin{array}{ccl}
f_1 &=& 54u_1^3-54u_1^2u_2+8u_1^2+18u_1u_2^2+16u_1u_2-2u_2^3+8u_2^2+8u_2+1,\\
f_2 &=& -27u_1^3+27u_1^2u_2-24u_1^2-9u_1u_2^2-48u_1u_2-15u_1+u_2^3-24u_2^2-19u_2-3.
\end{array}
$$
After homogenization we obtain
$$
\begin{array}{ccl}
f_1^h &=& 54u_1^3-54u_1^2u_2+8u_0u_1^2+18u_1u_2^2+16u_0u_1u_2-2u_2^3+8u_0u_2^2\\
& &+8u_0^2u_2+u_0^3,\\
f_2^h &=& -27u_1^3+27u_1^2u_2-24u_0u_1^2-9u_1u_2^2-48u_0u_1u_2-15u_0^2u_1+u_2^3-24u_0u_2^2\\
& & -19u_0^2u_2-3u_0^3.
\end{array}
$$
The homogeneous polynomials $f_1^h$ and $f_2^h$ are converted into the symmetric third order tensors $\ten{F}_1,\ten{F}_2 \in \mathbb{R}^{3\times 3 \times 3}$. Each application of the STEROID algorithm results in 15 $v_j$ vectors, from which a basis $X$ of 30 vectors is constructed. The least-squares problem \eqref{eq:ls} is then solved for its minimum norm solution $l_{1},l_{2}$, with residuals $2.30\times 10^{-14}$ and $2.36\times 10^{-14}$ respectively. Both $l_{1}$ and $l_{2}$ contains 10 nonzero entries. Setting $u_0=1$ we obtain
$$
\begin{array}{ccl}
f_1 &=& \sum_{j=1}^{10} l_{1j} \, (b_j + \sum_{k=1}^{2} t_{jk} u_k)^3,\\ [2ex]
f_2 &=& \sum_{j=1}^{10} l_{2j} \, (b_j + \sum_{k=1}^{2} t_{jk} u_k)^3.
\end{array}
$$
\end{example}

\section{Conclusions and Remarks}
\label{sec:conclusions}
A constructive decomposition algorithm, named STEROID, has been proposed to decompose a symmetric tensor into a real linear combination of symmetric unit-norm rank-1 tensors. The method exploits symmetry and permits an efficient computation, e.g. via the symmetric QR algorithm or divide-and-conquer method, in subsequent reshapings and foldings of intermediate symmetric matrices. In contrast to other iterative methods, STEROID does not require any initial guess and and can handle large symmetric tensors. The original STEROID algorithm works with symmetric tensors whose order is a power of two, whereas an innovative tensor embedding technique is developed to remove this constraint and allows the computation of a STEROID for arbitrary orders. In addition, two methods are discussed that reduce the size of the least-squares problem, thereby increasing the feasibility to tackle large-size problems. Numerical examples have verified the high efficiency and scalability of STEROID and have demonstrated its superior performance in comparison to existing iterative methods. Finally, it was shown how STEROID can be used to decouple a set of multivariate polynomials into a collection of univariate polynomials in the setting of block-structured nonlinear system identification.

 \bibliographystyle{siam}
 \bibliography{references.bib}

\end{document}